\documentclass{article}
\usepackage{amsmath,amsxtra,amssymb,amsthm,amsfonts}
\usepackage{graphicx} 
\usepackage{color,graphicx}
\usepackage[margin=1.05in]{geometry}
\usepackage[dvipsnames]{xcolor}
\usepackage{url}

\definecolor{myHexColor}{HTML}{FFD700} 

\usepackage[pdftex,linktocpage=true,colorlinks,citecolor=myHexColor,linkcolor=pink,pagebackref]{hyperref}
\usepackage{hyperref}
\usepackage{cleveref}
\usepackage[alphabetic,backrefs]{amsrefs}
\usepackage{caption}
\usepackage{subcaption}
\usepackage[normalem]{ulem}

\usepackage{pgfplots}
\usepackage{pgf}
\usepackage{tikz}
\usetikzlibrary{patterns}
\usetikzlibrary{arrows.meta}
\usepgfplotslibrary{patchplots} 
\usetikzlibrary{pgfplots.patchplots} 
\pgfplotsset{width=9cm,compat=1.5.1}

\def\C{\mathbb{C}}

\def\R{\mathbb{R}}

\def\Z{\mathbb{Z}}

\def\0{\mathbf{0}}

\numberwithin{equation}{section}
\newtheorem{lemma}{Lemma}[section]

\newtheorem{theorem}[lemma]{Theorem}

\begin{document}

\title{
     The cycle $C_9$ does not admit uniform mixing
}

\author{  
  Alison Gray\textsuperscript{1}, Pransu Patel\textsuperscript{1}, and Isaiah Young\textsuperscript{2}
}

\maketitle

\begin{abstract}
   We study continuous-time quantum walks (CTQWs) on cycles. In particular, we prove that the cycle $C_9$ does not admit uniform mixing at any time via algebraic geometry and Gröbner basis techniques to rule out all cyclic 9-roots. 

    \medskip

    \noindent \textbf{Keywords:} Quantum walks,  uniform mixing, cycles, cyclic $n$-roots\medskip
	
	\noindent \textbf{MSC2020 Classification:}   
    13P10, 
    14J81, 
    97K30 
\end{abstract}

\addtocounter{footnote}{1}
\footnotetext{Department of Mathematics \& Computer Science, Brandon University, Brandon, MB, Canada R7A 6A9}
\addtocounter{footnote}{1}
\footnotetext{Department of Mathematics \& Computer Science, Mount Allison University, Sackville, NB, Canada E4L 1E4}

\section{Introduction}\label{sec:intro}

Continuous time quantum walks have become an important topic in quantum information theory because they have many applications in areas such as quantum computing and quantum communication. One interesting property of a quantum walk is uniform mixing, which occurs when every vertex of a graph has the same probability at a particular time. More formally, given a graph $G$ on $n$ vertices, uniform mixing occurs when every entry of the transition matrix $U(t) := e^{itA(G)}$ has a squared magnitude of $1/n$ at a particular time $t \in \R$. This property was first studied in 2002 by Moore and Russell in \cite{moore2002quantum}, and since then there has been extensive research conducted on the mixing properties of numerous families of graphs. While uniform mixing is known for multiple graph families, many important cases still remain open. 

Several graph families have been shown to admit uniform mixing, including the complete graphs $K_2$, $K_3$, and $K_4$ \cite{Ahmadi2002Mixing}, the Binary Hamming graphs $H(d,2)$ (hypercubes) \cite{mullin2013uniform}, folded Hamming graphs over $H(d,3)/\langle\mathbf{1}\rangle$ and $H(d,4)/\langle\mathbf{1}\rangle$ under certain conditions \cite{mullin2013uniform}, the cycle graphs $C_3$ and $C_4$ \cite{Ahmadi2002Mixing}, the Paley graph of order $9$ \cite{godsil2017uniform}, and the Star graph $K_{1,3}$ \cite{godsil2017uniformcayley}. Cycle graphs are one of the simplest graph families for studying uniform mixing.

It is also important to note that no even cycle $C_n$ with $n>4$ and no odd prime cycle $C_n$ with $n>3$ admit uniform mixing \cite{mullin2013uniform}.
In her thesis, Mullin \cite{mullin2013uniform} identified the cycle graph $C_9$ as one of the remaining open problems for uniform mixing. Since $9$ is the first odd composite number, the methods used to study odd prime cycles do not apply, making $C_9$ the first unresolved odd composite cycle.
Meanwhile, in the context of computational algebraic geometry, Sabeti \cite{sabeti2015scheme} showed that there are six two-dimensional families of cyclic 9-roots together with $6642$ isolated cyclic 9-roots.

In this paper, we study  these remaining families of cyclic 9-roots by converting them into cyclic Type-II matrices. We then compare these matrices with the transition matrix of $C_9$ obtained from the Fourier diagonalization of the adjacency matrix. This comparison shows that certain conditions must hold if uniform mixing were to occur. We show that these conditions lead to a contradiction and therefore conclude that $C_9$ does not admit uniform mixing at any time.

\textbf{Authors' Note:} The results in this paper were proven independently, yet concurrently with \cite{cao2026uniform}. While their work involves constructing a new Gröbner basis and showing that there is no time $t$ at which $U(t)$ is flat based on the given parameters, our work is based on an already existing Gröbner basis and comparing the Type-II matrix to the Fourier-diagonalized matrix which led us to a contradiction. Therefore, there is little overlap between our works.

\section{Proof of $C_9$}\label{sec:proof}
In this section, $\zeta = \frac{1-3i}{2}$ is the primitive sixth root of unity, which is the same as $-\omega$ (where $\omega = \frac{-1+3i}{2}$ is the primitive third root of unity). Recall that a complex Hadamard matrix is an $n\times n$ matrix $H$ whose entries have unit modulus and satisfy
\[
HH^{*}=nI,
\]
where $H^{*}$ denotes the conjugate transpose of $H$. Motivated by Jones' work on spin models \cite{jones1989knot}, a \emph{Type-II matrix} is an $n\times n$ matrix $W$ with no zero entries satisfying
\[
W\circ W^{(-)T}=J
\]
and
\[
WW^{(-)T}=nI,
\]
where $W^{(-)}$ denotes the Schur inverse of $W$, $\circ$ denotes the Schur product, and $I$ and $J$ denote the identity and all-ones matrices, respectively.


We will work with all the cyclic 9-root solutions that can generate complex Hadamard matrices of order 9. This will help us to compare the transition matrix with the Type-II matrix that we form using \cite[Lemma 4.7.1]{mullin2013uniform}. The first row of said Type-II matrix can be formed by defining the first element $x_0 = 1$ and rest of the elements of the first row as $x_j = \prod_{i=0}^{j-1} z_i$ where each $z_i$ is a cyclic $n$-root for $1\leq j \leq n-1$. To do so we will use the ideals formed in \cite{sabeti2015scheme} to get the solution of the cyclic 9-roots.

In \cite[Eq. 13]{sabeti2015scheme}, we can reduce the equations to Gröbner basis as shown in \cite[Appendix]{sabeti2015scheme}. Here, we will first re-write them with our notation to not cause confusion.

Following Sabeti's notation, $C_2^{9,i}$ denotes the $i^{th}$ two-dimensional prime ideal in the minimal prime decomposition of the radical of the cyclic 9-roots ideal, $\sqrt{I(C_9)}$, where the superscript 9 refers to the cyclic 9-roots system, the subscript 2 indicates the dimension of the corresponding irreducible component, and $i \in {1,...,6}$ indexes the six distinct two-dimensional components.

\begin{align} 
    C_2^{9,1} &=  I(x_1 - \omega^2x_7,x_1-\omega x_4,x_2-\omega^2x_8, x_2-\omega x_5, x_3 - \omega^2x_9,x_3 -\omega x_6, x_1x_2x_3 - \omega) \\
    C_2^{9,2} &=  I(x_1 - \omega^2x_7,x_1-\omega x_4,x_2-\omega^2x_8, x_2-\omega x_5, x_3 - \omega^2x_9,x_3 -\omega x_6, x_1x_2x_3 - \omega^2) \\
    C_2^{9,3} &= I(x_1 - \omega^2x_7,x_1-\omega x_4,x_2-\omega^2x_8, x_2-\omega x_5, x_3 - \omega^2x_9,x_3 -\omega x_6, x_1x_2x_3 - 1) \\
    C_2^{9,4} &=  I(x_1 - \omega x_7,x_1-\omega^2 x_4,x_2-\omega x_8, x_2-\omega^2 x_5, x_3 - \omega x_9,x_3 -\omega^2 x_6, x_1x_2x_3 - \omega) \\
    C_2^{9,5} &=  I(x_1 - \omega x_7,x_1-\omega^2 x_4,x_2-\omega x_8, x_2-\omega^2 x_5, x_3 - \omega x_9,x_3 -\omega^2 x_6, x_1x_2x_3 - \omega^2) \\
    C_2^{9,6} &=  I(x_1 - \omega x_7,x_1-\omega^2 x_4,x_2-\omega x_8, x_2-\omega^2 x_5, x_3 - \omega x_9,x_3 -\omega^2 x_6, x_1x_2x_3 - 1)
\end{align}

\begin{theorem}
    The cycle graph $C_9$ does not exhibit uniform mixing at any time $t \in \R$.
\end{theorem}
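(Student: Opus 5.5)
The plan is to translate uniform mixing into a statement about a cyclic $9$-root, and then show that no cyclic $9$-root is compatible with the spectral data of $C_9$. Since $A(C_9)$ is circulant, so is $U(t)=e^{itA(C_9)}$, and the Fourier diagonalization gives
\[
U(t)_{a,b}=\frac19\sum_{j=0}^{8} e^{2it\cos(2\pi j/9)}\,\xi^{j(a-b)},\qquad \xi=e^{2\pi i/9}.
\]
Suppose $U(t)$ is flat at some time $t$. Then $W=3U(t)$ is a circulant complex Hadamard matrix, hence a circulant Type-II matrix. By \cite[Lemma 4.7.1]{mullin2013uniform}, its first row, normalized so that $x_0=1$, has ratios $z_i=x_{i+1}/x_i$ that form a cyclic $9$-root. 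Two further constraints come from $C_9$ itself, and the whole argument uses only these:
\begin{itemize}
\item[(a)] Because $A$ is symmetric, $U(t)$ is symmetric, so the first row is palindromic: $x_k=x_{9-k}$.
\item[(b)] The eigenvalues of $W$ are the discrete Fourier transforms $\hat x(k)=\sum_j x_j\xi^{jk}$. These must equal $3e^{2it\cos(2\pi k/9)}/x_0'$ for the fixed normalizing unimodular constant $x_0'$. In particular, every $\hat x(k)$ has modulus exactly $3$, and $\hat x(k)=\hat x(9-k)$.
\end{itemize}

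Next I split according to Sabeti's decomposition \cite{sabeti2015scheme}: a point lies either on one of the six two-dimensional components $C_2^{9,1},\dots,C_2^{9,6}$, or is one of the $6642$ isolated roots.

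For the two-dimensional components, the defining relations say the root coordinates are quasi-periodic with period $3$, up to factors $\omega^{\pm1}$. Consequently the product over any three consecutive $z$'s is controlled by $x_1x_2x_3\in\{1,\omega,\omega^2\}$. I will propagate this to the partial products to get a relation of the form $x_{j+3}=\lambda\,\mu^{j}x_j$ with $\lambda,\mu$ cube roots of unity, determined by which $C_2^{9,i}$ the point lies on. Splitting the sum defining $\hat x(k)$ by residues mod $3$ then factors it as (a sum over $j\in\{0,1,2\}$) times a geometric factor $1+\rho+\rho^2$ with $\rho$ a cube root of unity depending on $k$. For six of the nine frequencies $k$ this factor vanishes, so $\hat x(k)=0$. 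That contradicts (b), since every eigenvalue of $3U(t)$ has modulus $3$. So no point of any two-dimensional component arises from $U(t)$.

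The main obstacle is this propagation step. I must translate Sabeti's relations, written in her variables, correctly into relations on my first-row entries $x_j$ (which are partial products of the $z_i$), and check in all six cases that the resulting quasi-periodicity really kills some Fourier coefficient. If a quadratic phase $\mu^{j}$ survives, I would instead combine it with the palindromic condition (a): imposing $x_k=x_{9-k}$ on the parametrized family should force $\lambda=\mu=1$, which makes the row $3$-periodic and again forces six vanishing eigenvalues.

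For the finitely many isolated roots, I will use the Gröbner basis from \cite[Appendix]{sabeti2015scheme} and impose the palindromic condition (a). I expect this to leave only a small set of candidates, each of which can be checked against (b). Each remaining row should fail the requirement that its Fourier transform have the form $3e^{2it\cos(2\pi k/9)}$ for a single real $t$. The key point is that the phases are then tied by the linear relations among the $\cos(2\pi k/9)$ (they sum to $-\tfrac12$ over $k=1,\dots,8$, with $\cos(2\pi/9)+\cos(4\pi/9)+\cos(8\pi/9)=0$), and these are incompatible with the phases obtained. Combining both cases, no time $t$ gives uniform mixing.
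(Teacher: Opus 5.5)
Your setup matches the paper's: the reduction to a circulant Type-II matrix, the palindromic first row, and the split along Sabeti's decomposition. The step that fails is the one carrying the whole two-dimensional case. No Fourier coefficient vanishes on these components, and none can: a circulant Type-II matrix satisfies $WW^{(-)T}=9I$, so it is invertible.

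Concretely, your propagation gives $x_{j+3}=\rho\,\sigma^{j}x_j$ with $\rho=x_3\in\{1,\omega,\omega^2\}$ and $\sigma\in\{\omega,\omega^2\}$ on every one of the six components. So the quadratic phase always survives. Then $\hat x(k)=\sum_{r=0}^{2}x_r\xi^{rk}\sum_{m=0}^{2}(\rho\sigma^r\omega^k)^m$. Because $\sigma$ is a primitive cube root of unity, exactly one $r$ makes the inner sum nonzero, so $\hat x(k)=3x_r\xi^{rk}\neq 0$.

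Your fallback fails for the same reason. The palindromic condition forces $\rho=1$ and $x_2=\sigma^2x_1$, but it leaves $\sigma\neq 1$ untouched. The surviving rows $(1,u,\sigma^2u,1,\sigma u,\sigma u,1,\sigma^2u,u)$ with $|u|=1$ (both values of $\sigma$ survive) are genuine symmetric circulant complex Hadamard matrices, and all their eigenvalues have modulus $3$. So condition (a) and the modulus part of (b) hold identically on this family and cannot produce a contradiction.

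What is missing is the use of the actual phases in (b), which is how the paper argues. On the surviving family, $\hat x(0)=\hat x(3)=\hat x(6)=3$ and $\hat x(1)/\hat x(2)=\sigma^2$. For $\sigma=\omega$ this is $\hat x(1)=3u\xi^8$ and $\hat x(2)=3u\xi^2$, as in the paper.

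Now match with $x_0'\,\hat x(k)=3e^{2it\cos(2\pi k/9)}$:
\begin{itemize}
\item Frequencies $k=0,3$ give $e^{3it}=1$. This handles your normalizing constant, which the paper silently sets to $1$.
\item Frequencies $k=1,2$ give $e^{2it(\cos(2\pi/9)-\cos(4\pi/9))}=e^{2\sqrt3\,it\sin(\pi/9)}=\sigma^2\neq 1$.
\end{itemize}
With $t\in\frac{2\pi}{3}\Z$ this is impossible. The case $t=0$ gives $1=\sigma^2$. Any $t\neq 0$ forces $\sqrt3\sin(\pi/9)\in\mathbb{Q}$, contradicting that it is a root of $8x^3-18x+9$, which has no rational roots.

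Your idea of exploiting $\mathbb{Q}$-linear relations among the $\cos(2\pi k/9)$, which you reserved for the isolated roots, is exactly what the two-dimensional case needs. Note also that your isolated-root step is so far only a description of an expected computation, not an argument. The paper likewise only asserts that case, in its conclusion.
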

\begin{proof}
    We prove this by contradiction. To show a contradiction, we start with the assumption that $C_9$ exhibits uniform mixing at some time $t = \tau \in \R$. We denote $U_2(\tau)$ as the transition matrix derived from converting these cyclic 9-roots into a Type-II matrix. Let $x_1 = u$ and $x_2 = v$ (where $u,v \in \C$), and we get the cyclic 9-roots as follows:
    
    From $C_2^{9,1}$, 
    \begin{align*}
        x_1x_2x_3 - \omega &= 0 &\implies x_3 &= \frac{\omega}{uv} \\
        x_1 - \omega x_4 &= 0 &\implies x_4 &= \omega^2u \\
        x_2 - \omega x_5 &= 0 &\implies x_5 &= \omega^2v \\
        x_3 - \omega x_6 &= 0 &\implies x_6 &= \frac{1}{uv} \\
        x_1 - \omega^2x_7 &= 0 &\implies x_7 &= \omega u \\
        x_2 - \omega^2x_8 &= 0 &\implies x_8 &= \omega v \\
        x_3 - \omega^2x_9 &= 0 &\implies x_9 &= \frac{\omega^2}{uv}
    \end{align*}

    Similarly, for 2.2 to 2.6,
    
        \begin{center}
        \begin{tabular}{cccccc}
        - & $C_2^{9,2}$ & $C_2^{9,3}$ & $C_2^{9,4}$ & $C_2^{9,5}$ & $C_2^{9,6}$\\
            $x_1$ & $u$ & $u$ & $u$ & $u$ & $u$\\
            $x_2$ & $v$ & $v$ & $v$ & $v$ & $v$\\
            $x_3$ & $\frac{\omega^2}{uv}$ & $\frac{1}{uv}$ & $\frac{\omega}{uv}$ & $\frac{\omega^2}{uv}$ & $\frac{1}{uv}$\\
            $x_4$ & $\omega^2u$ & $\omega^2u$ & $\omega u$ & $\omega u$ & $\omega u$\\
            $x_5$ & $\omega^2v$ & $\omega^2v$ & $\omega v$ & $\omega v$ & $\omega v$\\
            $x_6$ & $\frac{\omega}{uv}$ & $\frac{\omega^2}{uv}$ & $\frac{\omega^2}{uv}$ & $\frac{1}{uv}$ & $\frac{\omega}{uv}$\\
            $x_7$ & $\omega u$ & $\omega u$ & $\omega^2u$ & $\omega^2u$ & $\omega^2u$\\
            $x_8$ & $\omega v$ & $\omega v$ & $\omega^2v$ & $\omega^2v$ & $\omega^2v$\\
            $x_9$ & $\frac{1}{uv}$ & $\frac{\omega}{uv}$ & $\frac{1}{uv}$ & $\frac{\omega}{uv}$ & $\frac{\omega^2}{uv}$\\
        \end{tabular}
        \label{tab:placeholder}
        \end{center}

    Next, we use \cite[Lemma 4.7.1]{mullin2013uniform} to convert the cyclic 9-roots into a 9 $\times$ 9 Type-II matrix. The following are the first row elements that we get from each of the ideals: \\

        \begin{center}
        \begin{tabular}{cccccccc}
            - & $C_2^{9,1}$ & $C_2^{9,2}$ & $C_2^{9,3}$ & $C_2^{9,4}$ & $C_2^{9,5}$ & $C_2^{9,6}$ & generalization\\
            $U_2(\tau)_{1,0}$ & $1$ & $1$ & $1$ & $1$ & $1$ & $1$ & $1$\\
            $U_2(\tau)_{1,1}$ & $u$ & $u$ & $u$ & $u$ & $u$ & $u$ & $u$\\
            $U_2(\tau)_{1,2}$ & $uv$ & $uv$ & $uv$ & $uv$ & $uv$ & $uv$ & $uv$\\
            $U_2(\tau)_{1,3}$ & $\omega$ & $\omega^2$ & $1$ & $\omega$ & $\omega^2$ & $1$ & $\rho$\\
            $U_2(\tau)_{1,4}$ & $u$ & $\omega u$ & $\omega^2u$ & $\omega^2u$ & $u$ & $\omega u$ & $\rho\sigma u$\\
            $U_2(\tau)_{1,5}$ & $\omega^2uv$ & $uv$ & $\omega uv$ & $uv$ & $\omega uv$ & $\omega^2uv$ & $\rho\sigma^2uv$\\
            $U_2(\tau)_{1,6}$ & $\omega^2$ & $\omega$ & $1$ & $\omega^2$ & $\omega$ & 1 & $\rho^2$\\
            $U_2(\tau)_{1,7}$ & $u$ & $\omega^2u$ & $\omega u$ & $\omega u$ & $u$ & $\omega^2u$ & $\rho^2\sigma^2u$\\
            $U_2(\tau)_{1,8}$ & $\omega uv$ & $uv$ & $\omega^2uv$ & $uv$ & $\omega^2uv$ & $\omega uv$ & $\rho^2\sigma uv$\\
        \end{tabular}
        \label{tab:placeholder}
        \end{center}
        \

    We generalize the 6 results in the last column where $\rho = \{1, \omega, \omega^2\}$ and $\sigma =\{\omega,\omega^2\}$. Now, we reduce the generalization by using the symmetry in $C_9$. From $U(t)$, we know that $a_8 = a_1$, $a_7 = a_2$, $a_6 = a_3$, and $a_5 = a_4$. 
    \\
    From $a_6 = a_3 \implies \rho^2 = \rho \implies \rho = 1$. \\
    From $a_7 = a_2 \implies \rho^2\sigma^2u = uv \implies \sigma^2 = v$. \\
    From $a_8 = a_1 \implies \rho^2\sigma uv = u \implies \sigma v = 1 \implies \sigma^3 = 1 \implies \sigma = \omega$. Therefore, $v = \sigma^2 = \omega^2$.

    Therefore, by substituting the values for $\sigma$, $\rho$, and $v$, we get the following as the first row of the Type-II matrix:
    
    \[ U_2(\tau) = \frac{1}{3}\begin{bmatrix}
        1 & u & \omega^2u & 1 & \omega u & \omega u & 1 & \omega^2u & u 
    \end{bmatrix} \]

    We know from \cite[Lemma 4.5.2]{mullin2013uniform}, $U(t) = \frac{1}{3}U_2(t)$ for all $t \in \R$, so $U(\tau) = \frac{1}{3}U_2(\tau)$. It is well known that the eigenvalues of $A(C_9)$ are $\lambda_k = 2\cos{\frac{2\pi k}{9}}$ where $k=0,1,...,8$. Now, we can diagonalize $U(\tau)$ as $U(\tau) = F e^{i\tau D} F^*$ where $D$ is the diagonal matrix of the eigenvalues ($\lambda_k$) of $A(C_9)$, so $3U(\tau) = F 3e^{i\tau D} F^*$. Similarly, we can diagonalize $U_2(\tau) = F D_2 F^*$ where $D_2$ is the diagonal matrix whose diagonal elements are $\mu_k = \sum\limits_{j=0}^8 U_2(\tau)_{1,j}\zeta^{jk}$. Let $\delta_k = 3e^{i\tau\lambda_k}$, where each $\delta_k$ is an eigenvalue of $3U(\tau)$, coming from the transition matrix of $A(C_9)$. From here, we get that $3e^{i\tau D} = D_2 \implies \delta_k = \mu_k$. So for $k = 0$:
\begin{align*}
    \mu_0 &= 1 + u + u\omega^2 + 1 + u\omega + u\omega + 1 + u\omega^2 + u \\
    &= 3 + 2u(1 + \omega + \omega^2) \\
    &= 3, \\
    \delta_0 &= 3e^{i\tau\lambda_0} \\
    &= 3e^{2i\tau}
\end{align*}
Since $\mu_0 = \delta_0,\, 3 = 3e^{2i\tau}$, so $e^{2i\tau} = 1$. It follows that $\tau = n\pi$ for $n \in \Z$. Next, for $k = 1$:
\begin{align*}
    \mu_1 &= 1 + u\zeta + u\omega^2\zeta^2 + \omega + u\omega\zeta^4 + u\omega\zeta^5 + \omega^2 + u\omega^2\zeta^7 + u\zeta^8 \\
    &= (1 + \omega + \omega^2) + u(\zeta + \omega^2\zeta^2 + \omega\zeta^4 + \omega\zeta^5 + \omega^2\zeta^7 + \zeta^8) \\
    &= u(\zeta + \zeta^6\cdot\zeta^2 + \zeta^3\cdot\zeta^4 + \zeta^3\cdot\zeta^5 + \zeta^6\cdot\zeta^7 + \zeta^8) \\
    &= u(\zeta + \zeta^4 + \zeta^7 + 3\zeta^8) \\
    &= u(\zeta(1 + \omega + \omega^2) + 3\zeta^8) \\
    &= 3u\zeta^8, \\
    \delta_1 &= 3e^{i\tau\lambda_1} \\
    &= 3e^{2i\tau\cos(2\pi/9)}
\end{align*}
Now, since $\mu_1 = \delta_1,\, 3u\zeta^8 = 3e^{2i\tau\cos(2\pi/9)}$, so $u\zeta^8 = e^{2i\tau\cos(2\pi/9)}$. Finally, for $k = 2$:
\begin{align*}
    \mu_2 &= 1 + u\zeta^2 + u\omega^2\zeta^4 + \omega^2 + u\omega\zeta^8 + u\omega\zeta + \omega + u\omega^2\zeta^5 + u\zeta^7 \\
    &= (1 + \omega + \omega^2) + u(\zeta^2 + \omega^2\zeta^4 + \omega\zeta^8 + \omega\zeta + \omega^2\zeta^5 + \zeta^7) \\
    &= u(\zeta^2 + \zeta^6\cdot\zeta^4 + \zeta^3\cdot\zeta^8 + \zeta^3\cdot\zeta + \zeta^6\cdot\zeta^5 + \zeta^7) \\
    &= u(\zeta + \zeta^4 + \zeta^7 + 3\zeta^2) \\
    &= u(\zeta(1 + \omega + \omega^2) + 3\zeta^2) \\
    &= 3u\zeta^2, \\
    \delta_2 &= 3e^{i\tau\lambda_2} \\
    &= 3e^{2i\tau\cos(4\pi/9)}
\end{align*}
Similarly, since $\mu_2 = \delta_2$, $3u\zeta^2 = 3e^{2i\tau\cos(4\pi/9)}$, so $u\zeta^2 = e^{2i\tau\cos(4\pi/9)}$. We can thus take the expressions involving $u\zeta^2$ and $u\zeta^8$ and use them to solve for $\tau$.
\begin{align*}
    \frac{u\zeta^8}{u\zeta^2} &= \frac{3e^{2i\tau\cos(4\pi/9)}}{3e^{2i\tau\cos(2\pi/9)}}\\
    \zeta^6 &= e^{2i\tau\big(\cos(4\pi/9) - \cos(2\pi/9)\big)}\\
    e^{4\pi i/3} &= e^{2\sqrt{3}i\tau\sin(\pi/9)}
\end{align*}
Given that $\tau = n\pi$ for $n \in \Z$ from earlier, we can substitute this back into the equation to solve for $n$.
\begin{align*}
    e^{4\pi i/3} &= e^{2\sqrt{3}n\pi i\sin(\pi/9)} \\
    \frac{4\pi}{3} &= 2\sqrt{3}n\pi\sin{\left(\frac{\pi}{9}\right)} + 2m\pi \text{, for } m \in \Z\\
    n &= \frac{2-3m}{3\sqrt{3}\sin{\left(\frac{\pi}{9}\right)}}
\end{align*}

To show $\sqrt{3}\sin(\frac{\pi}{9})$ is an irrational, let us define $x = \sqrt{3}\sin{\left(\frac{\pi}{9}\right)}$.

Using the triple-angle identity \[\sin(3\theta) = 3 \sin(\theta) - 4 \sin^3(\theta)\]

with $\theta = \frac{\pi}{9}$ gives \[\frac{\sqrt{3}}{2} = 3 \sin{\left(\frac{\pi}{9}\right)} - 4 \sin^3{\left(\frac{\pi}{9}\right)}\].

Substituting $\sin(\frac{\pi}{9})$ with $\frac{x}{\sqrt{3}}$, we get 

\begin{align*}
    &\frac{\sqrt{3}}{2} = \sqrt{3}x - \frac{4x^3}{3\sqrt{3}} \\ 
    \implies &9 = 18x - 8x^3 \\
    \implies &8x^3 - 18x + 9 = 0
\end{align*}

By the Rational Root Theorem, any rational root of
\[
8x^3-18x+9
\]
must satisfy $p\mid9$ and $q\mid8$ where $p,q \in \Z$. A direct verification shows that none of the resulting candidates is a root. Therefore, the polynomial has no rational roots. Since $x=\sqrt{3}\sin{\left(\frac{\pi}{9}\right)}$ is a root of this polynomial, it follows that
\[
\sqrt{3}\sin\left(\frac{\pi}{9}\right)
\]
is irrational. Hence, the denominator $3\sqrt{3}\sin{\left(\frac{\pi}{9}\right)}$ is irrational.

Since the numerator is an integer and the denominator is irrational, we can conclude that no $n \in \Z$ exists because $n$ will be an irrational number. Hence, by contradiction, there is no time $t \in \R$ such that uniform mixing occurs for $C_9$.
\end{proof}

\section{Conclusion and Future Work}
In this paper we proved that the cycle graph $C_9$ does not exhibit uniform mixing at any time $t \in \R$. This was the next open problem to explore, as previous authors have already proven mixing results on cycles $C_n$ for $n < 9$. Given the six 2-dimensional families of cyclic 9-roots and 6642 isolated cyclic 9-roots which are known in literature, we were able to easily rule out uniform mixing coming from any of the isolated 9-roots via the machinery in \cite{mullin2013uniform}.

We then inspected the six 2-dimensional families. Our techniques involved generalizing these families and converting them into a cyclic Type-II matrix. By doing so, we were able to compare the structure of this matrix with the Fourier-diagonalized transition matrix on $C_9$ and determine the necessary algebraic conditions necessary for uniform mixing to occur.

These constraints forced the condition that the time $t$ at which uniform mixing occurs must be an integer multiple of $2\pi$. However, later calculations showed that this multiple of $2\pi$ at which uniform mixing occurs must be irrational number, hence contradicting the requirement that it be an integer.

Therefore, we were able to show that there is no choice of cyclic 9-root --- neither isolated nor 2-dimensional --- that leads to uniform mixing on $C_9$, and thus conclude that the cycle $C_9$ never exhibits uniform mixing at any time $t \in \R$.

Given that uniform mixing results have already been proven for even cycles and odd prime cycles, we identify $C_{21}$ as the next open case. Our techniques of constructing and utilizing Gröbner bases of cyclic $n$-roots prove to be too computationally heavy for these larger values of $n$. Consequently, we do not employ these techniques for the $C_{21}$ case and recognize that other methods are needed for the proof.

\section{Acknowledgements}

We would like to thank our supervisors Dr.~Nathaniel Johnston and Dr.~Sarah Plosker for their mentorship, guidance, and continual encouragement throughout this research process. We are grateful for the time they both invested to discuss ideas, answer questions, and provide thoughtful feedback that greatly improved the quality of this paper. A.G.~gratefully acknowledges financial support by Dr.~Sarah Plosker's NSERC Discovery Grant number RGPIN-2025-05704. P.P.~was  supported by the Natural Sciences and Engineering Research Council of Canada (NSERC)  Undergraduate Student Research Award (USRA) Application number USRA-615931-2026. I.Y.~was supported by Dr.~Nathaniel Johnston's NSERC Discovery Grant number RGPIN-2022-04098.
\bibliographystyle{alpha}
\bibliography{ref}
\end{document}